\documentclass[11pt]{article}

\usepackage[T1]{fontenc}
\usepackage[utf8]{inputenc}
\usepackage[margin=1in]{geometry}
\usepackage{amsmath,amssymb,amsthm}
\usepackage{booktabs,tabularx}
\usepackage{xcolor}
\usepackage{tikz}
\usepackage{microtype}
\usepackage{hyperref}

\hypersetup{
  colorlinks=true,
  linkcolor=blue!45!black,
  citecolor=blue!45!black,
  urlcolor=blue!55!black,
  pdftitle={A 60-Vertex Lower Bound for Cubic Bipartite Counterexamples to the Erdos--Gyarfas Conjecture},
  pdfauthor={Julius Tranquilli}
}
\newtheorem{theorem}{Theorem}
\newtheorem{lemma}[theorem]{Lemma}
\newtheorem{proposition}[theorem]{Proposition}
\newtheorem{corollary}[theorem]{Corollary}
\theoremstyle{definition}
\newtheorem{definition}[theorem]{Definition}

\newcommand{\cB}{\mathcal{B}}
\newcommand{\cH}{\mathcal{H}}
\newcommand{\code}[1]{\texttt{#1}}

\tikzset{
  pointnode/.style={circle,draw=blue!55!black,fill=blue!7,
    minimum size=6.5mm,inner sep=1pt,font=\small},
  blocknode/.style={rectangle,rounded corners=1.5pt,
    draw=orange!70!black,fill=orange!10,
    minimum width=9mm,minimum height=6.5mm,inner sep=1pt,font=\small},
  cycleedge/.style={very thick,blue!60!black},
  newedge/.style={very thick,red!70!black},
  auxiliaryedge/.style={semithick,gray!65},
  compatnode/.style={circle,fill=blue!65!black,inner sep=1.5pt}
}

\title{A 60-Vertex Lower Bound for Cubic Bipartite\\
Counterexamples to the Erd\H{o}s--Gy\'arf\'as Conjecture}
\author{Julius Tranquilli\\
\small\href{mailto:jtranqs@gmail.com}{\texttt{jtranqs@gmail.com}}}
\date{2 August 2026\\[0.5ex]
\small\href{https://doi.org/10.5281/zenodo.21695513}
{\texttt{DOI: 10.5281/zenodo.21695513}}}

\begin{document}
\maketitle

\begin{abstract}
A certified exhaustive computation shows that every simple cubic bipartite
graph on at most \(58\) vertices contains a cycle of length \(4\), \(8\), or
\(16\). Consequently, any cubic bipartite counterexample to the
Erd\H{o}s--Gy\'arf\'as conjecture has at least \(60\) vertices, improving the
established published lower bound of \(30\).

The proof begins with a Moore-bound observation: below \(62\) vertices, a
cubic bipartite graph avoiding \(4\)- and \(8\)-cycles must contain a
\(6\)-cycle. Viewing the graph as the Levi graph of a linear symmetric
\(v_3\)-configuration turns this \(6\)-cycle into a Berge triangle. Up to
symmetry, only two rooted extensions are possible. A complete
restricted-growth search on at most \(29\) points closes both search trees.
The computation is checked by two separately implemented searches using
different \(C_{16}\) oracles and by a static witness certificate. Source code,
certificates, and reproduction instructions are archived with the paper.
\end{abstract}

\noindent\textbf{Keywords.} Erd\H{o}s--Gy\'arf\'as conjecture; cubic bipartite
graphs; prescribed cycle lengths; exhaustive generation; symmetric
configurations; computer-assisted proof.

\medskip
\noindent\textbf{2020 Mathematics Subject Classification.} Primary 05C38;
Secondary 05C30, 68V05.

\section{Introduction}

The Erd\H{o}s--Gy\'arf\'as conjecture asks whether every finite simple graph
of minimum degree at least three contains a simple cycle whose length is a
power of two~\cite{Erdos1997}.  In a simple graph the first relevant lengths
are \(4,8,16,32,\ldots\).  The conjecture remains open, although it is known
for several restricted graph classes; examples include \(3\)-connected cubic
planar graphs~\cite{HeckmanKrakovski2013}, \(P_{10}\)-free
graphs~\cite{HuShen2024}, and, with computer assistance, \(P_{13}\)-free
graphs~\cite{HegdeSandeepShashank2025}.  Recent work also gives strong
structural restrictions on a minimal counterexample~\cite{Carr2026}.

Numerically, the result raises the established published lower bound
applicable to the cubic-bipartite class from \(n\geq30\) to \(n\geq60\), and
raises the newest public computational bound from \(n\geq32\) to
\(n\geq60\).

This paper concerns the finite-order frontier inside the class of simple
cubic bipartite graphs.  Its main result is deliberately stated in a
stronger form than a bare verification of the conjecture.

\begin{theorem}[Finite cubic-bipartite frontier]\label{thm:main}
Every simple cubic bipartite graph \(G\) with
\[
  |V(G)|\leq 58
\]
contains a simple cycle of length \(4\), \(8\), or \(16\).
\end{theorem}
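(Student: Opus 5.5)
The proof will be computer-assisted, and the plan is to shrink the search space by structure before enumerating. Suppose for contradiction that \(G\) is simple, cubic, bipartite, has \(n\le 58\) vertices, and has no cycle of length \(4\), \(8\) or \(16\). We may take \(G\) connected, since a component is again cubic bipartite and smaller. Because \(G\) is bipartite and regular, both sides have the same size \(v=n/2\le 29\).

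\textbf{Step 1 (forcing a 6-cycle).} We show that \(G\) has girth exactly \(6\). If the girth were at least \(10\), the Moore bound for bipartite cubic graphs of girth \(10\) would give \(n\ge 2(1+2+4+8+16)=62\), which is impossible. Girth \(4\) and girth \(8\) are excluded by assumption, so the girth is \(6\) and \(G\) contains a \(6\)-cycle.

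\textbf{Step 2 (configuration model).} We view \(G\) as the Levi graph of a \(v_3\)-configuration. The points are one colour class, the lines are the other, and each point lies on three lines and each line has three points. Having no \(4\)-cycle means the configuration is linear. A \(6\)-cycle then becomes a Berge triangle: three points \(p_1,p_2,p_3\) and three lines \(\ell_{12},\ell_{23},\ell_{13}\). Each line carries one further point. Each \(p_i\) lies on one further line. We enumerate, up to the symmetry of the rooted triangle, how these extra incidences can coincide without creating a \(4\)- or \(8\)-cycle. I expect this to leave a small number of rooted extensions, plausibly two, and each will seed its own search tree.

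\textbf{Step 3 (exhaustive search).} From each rooted extension we grow the configuration line by line, with points labelled in restricted-growth order to break the labelling symmetry. We prune immediately whenever a partial incidence structure contains a \(4\)-cycle or an \(8\)-cycle; in configuration terms these are two lines sharing two points, or a Berge \(4\)-gon. We also prune when the number of points would exceed \(29\). For every completed configuration we check the Levi graph for a \(16\)-cycle with a dedicated oracle. A completion passes only if no such cycle exists, and the theorem is the statement that no completion passes. For certification the search is run by two independent implementations, each using a different \(C_{16}\) detector. In addition, every leaf or prunable node is stored together with an explicit witness cycle of length \(4\), \(8\) or \(16\), so a simple checker can verify the outcome statically.

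\textbf{Main obstacle.} The hard part is the size of the search. Partial structures can avoid \(C_4\) and \(C_8\) for a long time, and \(C_{16}\) pruning in the middle of the search is both expensive and weak. The first thing I would try is to test for a \(16\)-cycle already inside partial structures: as soon as the partial Levi graph contains one, the branch is closed. I would also always choose the next point to complete so that it has the fewest remaining options, as in DLX-style search. Alongside this, the symmetry reduction from the rooted triangle, together with canonical augmentation, should keep the count of isomorphic duplicates manageable at \(v\le 29\).
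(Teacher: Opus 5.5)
Your plan is correct and follows the paper's proof essentially step for step: the edge-rooted Moore bound of $62$ forces a $C_6$, the Levi-graph translation turns it into a Berge triangle in a linear $v_3$-configuration, and a cap-$29$ restricted-growth search from two rooted orbits rejects $C_4$, $C_8$ and, incrementally, $C_{16}$ (detected as a $14$-edge path between two points of each new block); this search is run by two implementations with different $C_{16}$ oracles and backed by a static witness certificate. The differences are minor: the paper normalizes only the fourth block through one triangle vertex, giving exactly the roots $\{0,4,6\}$ and $\{0,6,7\}$, rather than all three further lines; and it proves coverage by an explicit invariant that relies on always completing the least unfinished point with blocks in lexicographic order, so your DLX-style dynamic ordering would need its own completeness argument.
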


\begin{corollary}\label{cor:bound}
Any simple cubic bipartite counterexample to the Erd\H{o}s--Gy\'arf\'as
conjecture has at least \(60\) vertices.
\end{corollary}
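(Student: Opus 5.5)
The statement immediately above is the corollary, and it follows directly from Theorem~\ref{thm:main}; no further search is needed. The plan is to take an arbitrary simple cubic bipartite counterexample \(G\) and bound \(|V(G)|\) from below by two observations: a parity argument, and the fact that the cycles excluded by Theorem~\ref{thm:main} all have power-of-two lengths.

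\emph{Step 1 (parity).} Let \(G\) be a simple cubic bipartite graph with bipartition \((A,B)\). Every edge has exactly one endpoint in each side, so counting edges from each side gives \(3|A| = |E(G)| = 3|B|\). Hence \(|A|=|B|\) and \(|V(G)| = 2|A|\) is even. The handshake lemma alone (\(3|V(G)| = 2|E(G)|\)) gives the same conclusion.

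\emph{Step 2 (applying the theorem).} Suppose in addition that \(G\) is a counterexample to the Erd\H{o}s--Gy\'arf\'as conjecture. Then \(G\) has minimum degree \(3\) and no simple cycle whose length is a power of two. In particular \(G\) has no cycle of length \(4=2^2\), \(8=2^3\) or \(16=2^4\). Since \(G\) is simple cubic bipartite, the contrapositive of Theorem~\ref{thm:main} gives \(|V(G)| \geq 59\).

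\emph{Step 3 (conclusion).} By Step~1, \(|V(G)|\) is even, so \(|V(G)| \neq 59\) and therefore \(|V(G)| \geq 60\).

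There is no real obstacle here; all of the difficulty sits inside Theorem~\ref{thm:main}, which is assumed. The one point to state explicitly is the parity step, which turns the bound \(59\) into \(60\).
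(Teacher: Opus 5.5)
Your proof is correct and matches the paper's own argument: the paper likewise uses \(3|L|=|E(G)|=3|R|\) to get an even order, lets Theorem~\ref{thm:main} exclude every order through \(58\), and concludes the next possible order is \(60\). Your side remark is also right: the handshake identity \(3|V(G)|=2|E(G)|\) gives the parity without using bipartiteness.
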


\begin{proof}
Let \(L\) and \(R\) be the two bipartition classes of a counterexample \(G\).
Cubicity gives
\[
  3|L|=|E(G)|=3|R|,
\]
so \(|L|=|R|\) and \(|V(G)|\) is even.  Theorem~\ref{thm:main} excludes all
possible orders through \(58\); the next possible order is \(60\).
\end{proof}

\subsection{Comparison with previous bounds}

Markstr\"om's 2004 computation established the customary cubic lower bound
\(n\geq30\)~\cite{Markstrom2004}. The accessible abstract of a 2011
bipartite computation also states \(30\), although later sources attribute
\(32\) to that work
\cite{NowbandeganiEsfandiari2011,NowbandeganiEtAl2014,HuShen2024}. Separately,
a public 2026 SAT/SAT-Modulo-Symmetries computation excludes all
minimum-degree-three graphs through order \(31\), giving the more broadly
applicable bound \(n\geq32\)~\cite{Balaji2026}. Table~\ref{tab:comparison}
records both comparisons.

\begin{table}[ht]
\centering
\small
\caption{Comparison with the previously applicable finite-order bounds.}
\label{tab:comparison}
\begin{tabularx}{\textwidth}{@{}Xcccl@{}}
\toprule
Comparison source & Previous & Present & Increase & Newly excluded
cubic-bipartite orders\\
\midrule
Published cubic result (Markstr\"om, 2004)
  & \(n\geq30\) & \(n\geq60\) & \(30\)
  & \(30,32,\ldots,58\) (15 orders)\\
Public minimum-degree-three computation (2026)
  & \(n\geq32\) & \(n\geq60\) & \(28\)
  & \(32,34,\ldots,58\) (14 orders)\\
\bottomrule
\end{tabularx}
\end{table}

Theorem~\ref{thm:main} also shows that one of the first three relevant
power-of-two cycle lengths is forced; no \(32\)-cycle is needed.

\subsection{Proof outline}

The proof has two ingredients: a short structural reduction and a finite
certified search. A cubic bipartite graph is first represented as the Levi
graph of a symmetric \(3\)-uniform, \(3\)-regular set system. An edge-rooted
Moore bound then forces a \(C_6\), hence a Berge triangle. After normalizing
that triangle, only two root orbits remain. A universal restricted-growth
search with point cap \(29\) exhausts both orbits and recognizes a completed
configuration as soon as all introduced points are cubic. The resulting
certificate covers the entire range \(v\leq29\) at once.

\section{Incidence configurations and cycle translations}

Let \(G=(X,Y;E)\) be a connected simple cubic bipartite graph.  As in the
proof of Corollary~\ref{cor:bound}, write
\[
  |X|=|Y|=:v,\qquad |V(G)|=2v.
\]
For each \(y\in Y\), let \(B_y=N_G(y)\) and regard
\(\cB=(B_y)_{y\in Y}\) as an indexed family of three-element blocks on the
point set \(X\).  Repeated triples are allowed at this stage: distinct
vertices of \(Y\) remain distinct block indices even if they have the same
neighborhood.  The family has \(v\) indexed blocks, and each point belongs to
exactly three of them.

\begin{proposition}[Incidence translation]\label{prop:incidence}
The neighborhood construction is a bijection, up to the natural relabelings,
between connected simple cubic bipartite graphs with specified bipartition
\((X,Y)\) and connected \(3\)-uniform, \(3\)-regular incidence structures
\((X,(B_y)_{y\in Y})\) having \(v\) points and \(v\) indexed blocks.
\end{proposition}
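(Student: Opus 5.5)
The plan is to write down the two maps explicitly and check that each lands in the correct class and that they are mutually inverse up to relabeling. In the forward direction, take a connected simple cubic bipartite graph \(G=(X,Y;E)\) and send it to the structure \((X,(B_y)_{y\in Y})\) with \(B_y=N_G(y)\). Three facts need checking. First, every block is a genuine \(3\)-element subset: \(G\) is simple, so the three neighbours of \(y\) are distinct, and \(|B_y|=\deg y=3\). Second, the structure is \(3\)-regular: the number of blocks containing \(x\) is \(|\{y: x\in N(y)\}|=\deg x=3\). Third, it has \(v\) points and \(v\) blocks, which follows from \(|X|=|Y|=v\) by the edge count already used in Corollary~\ref{cor:bound}. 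For connectedness of the incidence structure I will use its incidence graph, with points and block indices as vertices and \(x\sim y\) iff \(x\in B_y\); this graph is \(G\) itself, so the two notions of connectedness coincide.

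In the reverse direction, start from an incidence structure \((X,(B_y)_{y\in Y})\) with \(v\) points and \(v\) indexed blocks, and form the bipartite graph on \(X\sqcup Y\) with \(x\sim y\) iff \(x\in B_y\). This graph has no loops, since \(X\) and \(Y\) are disjoint, and no multiple edges, since membership \(x\in B_y\) is a yes/no relation. Hence it is simple. Its degrees are \(|B_y|=3\) and \(r(x)=3\), so it is cubic. It is bipartite with classes \(X\) and \(Y\), and it is connected exactly when the structure is connected. Repeated triples cause no trouble: they give distinct vertices \(y\neq y'\) of \(Y\) with equal neighbourhoods, which is allowed. This is exactly why the structure is taken to have \emph{indexed} blocks rather than a set of blocks.

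The two constructions are inverse to each other on the nose: \(N_G(y)=B_y\) recovers the family, and the incidence relation recovers \(E\). It remains to make ``up to natural relabelings'' precise. A graph isomorphism preserving the ordered bipartition consists of bijections \(\alpha:X\to X'\) and \(\beta:Y\to Y'\) with \(xy\in E\iff \alpha(x)\beta(y)\in E'\). This is exactly a point relabeling together with a block reindexing satisfying \(\alpha(B_y)=B'_{\beta(y)}\), which is an isomorphism of indexed incidence structures. So the bijection passes to isomorphism classes, and the correspondence is functorial in both directions.

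I do not expect any step to be a real obstacle. The only place that needs care is stating precisely which equivalence is meant. Isomorphisms of \(G\) that swap \(X\) and \(Y\) correspond to dualities rather than isomorphisms of the structure, and this is why the bipartition is specified. The indexing convention also has to be respected, so that repeated blocks and \(4\)-cycles, which correspond to two points sharing two blocks, are not silently discarded here; excluding them is left to the later translations.
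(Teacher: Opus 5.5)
Your proposal is correct and follows the paper's own proof. You use the same reverse construction, and the same three checks: degrees from uniformity and regularity, simplicity because membership is Boolean even when indexed blocks repeat, and connectedness because incidence walks are graph walks. You also make explicit, more fully than the paper does, that bipartition-preserving graph isomorphisms are exactly the point and block-index relabelings meant by ``up to the natural relabelings.''
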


\begin{proof}
The preceding construction gives the incidence structure.  Conversely, make
one graph vertex for each point and one for each block index \(y\), and join
\(x\) to \(y\) precisely when \(x\in B_y\).  Uniformity gives degree three on
the block side, regularity gives degree three on the point side, and the two
kinds of objects form a bipartition.  Each membership is a Boolean relation,
so there is at most one edge between a given point and block index even when
two indexed blocks are equal.  The two notions of connectedness are the same
because an incidence walk is exactly a graph walk in the constructed
bipartite graph.
\end{proof}

\begin{definition}
An indexed triple system is \emph{linear} if blocks with distinct indices
have at most one common point; equivalently, no unordered pair of points
occurs in two indexed blocks.
\end{definition}

\begin{lemma}[\(C_4\)]\label{lem:c4}
The incidence graph contains a simple \(4\)-cycle if and only if two distinct
blocks contain the same pair of points.
\end{lemma}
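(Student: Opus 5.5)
The plan is to prove both directions directly from the bipartite structure of the incidence graph, using the correspondence of Proposition~\ref{prop:incidence}: graph vertices on the \(X\)-side are points, vertices on the \(Y\)-side are block indices, and an edge \(xy\) means \(x\in B_y\).

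\emph{Forward direction.} Take a simple \(4\)-cycle in the incidence graph. Because the graph is bipartite, consecutive vertices on the cycle alternate between \(X\) and \(Y\). The cycle therefore has the form \(x_1 y_1 x_2 y_2 x_1\) with \(x_1,x_2\in X\) and \(y_1,y_2\in Y\). Simplicity of the cycle gives \(x_1\neq x_2\) and \(y_1\neq y_2\). The four edges say \(x_1,x_2\in B_{y_1}\) and \(x_1,x_2\in B_{y_2}\). So the two distinct block indices \(y_1\) and \(y_2\) both contain the pair \(\{x_1,x_2\}\).

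\emph{Reverse direction.} Suppose distinct indices \(y_1\neq y_2\) satisfy \(\{x_1,x_2\}\subseteq B_{y_1}\cap B_{y_2}\) with \(x_1\neq x_2\). Then the four edges \(x_1y_1\), \(y_1x_2\), \(x_2y_2\), \(y_2x_1\) all exist. The four vertices are pairwise distinct: the two points are distinct by assumption, the two indices are distinct by assumption, and no point equals an index since they lie in different bipartition classes. Hence \(x_1y_1x_2y_2x_1\) is a simple \(4\)-cycle.

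The only point needing care is that ``distinct blocks'' must mean distinct \emph{indices}, not distinct triples. This is exactly the convention fixed before Proposition~\ref{prop:incidence}, where repeated triples are allowed. Under this reading, two equal indexed blocks \(B_{y_1}=B_{y_2}\) with \(y_1\neq y_2\) do produce a \(4\)-cycle, as they should. I would also note the consequence: the incidence graph is \(C_4\)-free exactly when the indexed triple system is linear. No step is a genuine obstacle; the argument is a direct unwinding of the definitions.
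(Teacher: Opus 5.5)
Your proof is correct and follows the paper's argument: you read a simple $4$-cycle as the alternating sequence point, block, point, block with distinct points and distinct blocks, so the two blocks share the pair, and conversely a shared pair of distinct points in two distinct blocks closes that alternating $4$-cycle. Your remark that ``distinct blocks'' means distinct indices matches the paper's convention allowing repeated triples, and it simply makes explicit what the paper leaves implicit.
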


\begin{proof}
A \(4\)-cycle in a bipartite incidence graph alternates as
\[
  x,B,x',B',x
\]
with \(x\neq x'\) and \(B\neq B'\).  Thus \(x,x'\in B\cap B'\).  Conversely,
two blocks sharing distinct points \(x,x'\) give exactly this \(4\)-cycle.
\end{proof}

By Lemma~\ref{lem:c4}, it suffices after the first rejection test to work with
linear triple systems.  These are the symmetric combinatorial
\(v_3\)-configurations of the configuration literature; their Levi graphs are
exactly the cubic bipartite graphs of girth at least six
\cite{BettenBrinkmannPisanski2000,Boben2007}.

A Berge cycle of length \(k\geq3\) consists of distinct blocks
\(B_0,\ldots,B_{k-1}\) and distinct points \(p_0,\ldots,p_{k-1}\) with
\(p_i\in B_i\cap B_{i+1}\), where indices are read modulo \(k\).
Equivalently, it is a simple \(C_{2k}\) in the Levi graph; in particular, a
\(C_6\) is a Berge triangle.

\paragraph{Worked example.}
Take
\[
  B_1=\{a,b,c\},\qquad B_2=\{c,d,e\},\qquad
  B_3=\{e,f,a\}.
\]
Their successive intersection points are \(c,e,a\). In the Levi graph these
incidences form the alternating cycle shown in
Figure~\ref{fig:incidence-triangle}.

\begin{figure}[htbp]
\centering
\begin{tikzpicture}[scale=1.02]
  \node[pointnode] (a) at (0,1.65) {$a$};
  \node[blocknode] (B1) at (1.75,0.82) {$B_1$};
  \node[pointnode] (c) at (1.75,-0.82) {$c$};
  \node[blocknode] (B2) at (0,-1.65) {$B_2$};
  \node[pointnode] (e) at (-1.75,-0.82) {$e$};
  \node[blocknode] (B3) at (-1.75,0.82) {$B_3$};

  \node[pointnode] (b) at (3.05,0.82) {$b$};
  \node[pointnode] (d) at (0,-2.85) {$d$};
  \node[pointnode] (f) at (-3.05,0.82) {$f$};

  \draw[cycleedge] (a)--(B1)--(c)--(B2)--(e)--(B3)--cycle;
  \draw[auxiliaryedge] (B1)--(b);
  \draw[auxiliaryedge] (B2)--(d);
  \draw[auxiliaryedge] (B3)--(f);
\end{tikzpicture}
\caption{A Berge triangle and its Levi-graph \(C_6\). The blue cycle is
\(a-B_1-c-B_2-e-B_3-a\); the gray edges show the third point of each block.}
\label{fig:incidence-triangle}
\end{figure}

A familiar global example is the Fano plane: its seven points and seven lines
form a symmetric \(7_3\)-configuration, and its Levi graph is the cubic
bipartite Heawood graph.

\begin{lemma}[\(C_8\)]\label{lem:c8}
In a linear triple system, the incidence graph contains a simple \(8\)-cycle
if and only if there are four distinct blocks
\(B_0,B_1,B_2,B_3\) and four distinct points
\(p_0,p_1,p_2,p_3\) such that
\[
  p_i\in B_i\cap B_{i+1}\qquad (i\ \mathrm{mod}\ 4).
\]
In other words, the blocks contain a Berge quadrilateral.
\end{lemma}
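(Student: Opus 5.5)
The plan is to prove both directions by reading the \(8\)-cycle as an alternating sequence in the bipartite Levi graph, exactly as in the proof of Lemma~\ref{lem:c4}. By the remark preceding the Worked example, a Berge cycle of length \(k\) is the same thing as a simple \(C_{2k}\). The case \(k=4\) is this lemma, so the main work is to check that this translation is faithful. In particular, linearity must play whatever role is needed.

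For the forward direction, take a simple \(8\)-cycle in the incidence graph. Since the graph is bipartite with parts points and block indices, the cycle alternates as
\[
  p_3,B_0,p_0,B_1,p_1,B_2,p_2,B_3,p_3 .
\]
This uses four point vertices and four block vertices, all distinct because the cycle is simple. The two neighbours of \(B_i\) on the cycle are \(p_{i-1}\) and \(p_i\). Hence \(p_i\in B_i\cap B_{i+1}\) for each \(i\) modulo \(4\), which is exactly the stated configuration.

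For the converse, suppose the four distinct blocks and four distinct points are given. The closed walk \(p_3,B_0,p_0,B_1,p_1,B_2,p_2,B_3,p_3\) consists of incidence edges. It has no repeated vertices, since the points are pairwise distinct, the blocks are pairwise distinct, and points never coincide with blocks. Therefore it is a simple \(8\)-cycle.

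The step I expect to need care is the role of linearity. Distinctness of the \(p_i\) and \(B_i\) is assumed outright, so the equivalence seems to hold even without linearity. I would therefore note that the hypothesis is used only to match the standard notion of a Berge quadrilateral. In a linear system, consecutive blocks \(B_i,B_{i+1}\) meet in at most one point, so \(p_i\) is the unique point of \(B_i\cap B_{i+1}\) and the quadrilateral is determined by its four blocks. I would also check that \(B_i\) and \(B_{i+2}\) are allowed to meet: any such intersection is a chord that does not affect simplicity of the cycle.
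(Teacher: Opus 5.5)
Your proposal is correct and follows essentially the same argument as the paper. Both read a simple \(8\)-cycle as an alternating sequence of four distinct points and four distinct blocks, and both close the given incidences into the same alternating walk for the converse. Both also use linearity only to note that consecutive blocks meet in a unique point, so the Berge quadrilateral is unambiguous.
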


\begin{proof}
A simple \(8\)-cycle alternates between four distinct point vertices and four
distinct block vertices.  Reading it cyclically gives the displayed
incidences.  Conversely, those incidences form the alternating closed walk
\[
 p_0,B_1,p_1,B_2,p_2,B_3,p_3,B_0,p_0.
\]
The stipulated distinctness makes this walk a simple \(8\)-cycle.  Linearity
ensures that a pair of consecutive blocks has at most one intersection
point, so the test is unambiguous.
\end{proof}

\begin{lemma}[Incremental \(C_{16}\) oracle]\label{lem:c16}
Let \(H\) be a partial incidence graph with no \(C_{16}\), and add a new block
vertex \(b\) adjacent to the three points in \(B=\{x,y,z\}\).  A new simple
\(C_{16}\) is created if and only if \(H\) contains a simple path of length
\(14\) between two distinct members of \(B\).
\end{lemma}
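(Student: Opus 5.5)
The plan is to characterize which simple $16$-cycles of $H+b$ are new, and to set up a bijection between them and the simple paths of length $14$ in $H$ joining two distinct points of $B$. Write $H'=H+b$, where $b$ is the new block vertex and its edges are $bx$, $by$, $bz$. Since $H$ has no $C_{16}$, a simple $16$-cycle $C$ of $H'$ is new exactly when it passes through $b$. Indeed, a cycle avoiding $b$ uses only edges of $H$, so it already lies in $H$, which is impossible by hypothesis. The proof therefore reduces to showing that the $16$-cycles of $H'$ through $b$ correspond to such paths in $H$.

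For the forward direction, let $C$ be a simple $16$-cycle of $H'$ containing $b$. Because $C$ is a cycle, $b$ has exactly two neighbours on $C$. These must be two distinct members of $B$, say $u\neq w$. Delete $b$ from $C$. What remains is a simple path from $u$ to $w$ with $16-2=14$ edges, and it avoids $b$, so every edge lies in $H$. Thus $H$ contains a simple path of length $14$ between two distinct members of $B$.

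For the converse, suppose $P$ is a simple path of length $14$ in $H$ from $u$ to $w$, with $u,w\in B$ distinct. The vertex $b$ is not in $H$, so $b\notin V(P)$. The edges $bu$ and $bw$ are present in $H'$ and are distinct edges. Closing $P$ with $w,b,u$ gives a closed walk of length $14+2=16$ whose vertices are all distinct. It is therefore a simple $16$-cycle, and it is new because it contains $b$.

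There is no serious obstacle here; the point needing care is simplicity. One must check that the closed walk does not degenerate, and the requirement that the two endpoints be distinct members of $B$ guarantees this. Parity is also consistent: $u$ and $w$ are both point vertices, so a path between them in the bipartite graph $H$ has even length, and $14$ qualifies. I would also remark that the third point of $B$ may lie on $P$ as an interior vertex, and this does not matter. Finally, the hypothesis that $H$ is $C_{16}$-free is used only to identify the ``new'' cycles with those through $b$.
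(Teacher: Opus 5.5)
Your proposal is correct and follows the paper's route: a new $C_{16}$ must pass through $b$ using two of its three incidences, deleting $b$ leaves a simple $14$-edge path in $H$ between two distinct members of $B$, and conversely any such path closes through $b$ into a simple $16$-cycle. One bookkeeping correction: a cycle is ``new'' exactly when it uses $b$ whether or not $H$ is $C_{16}$-free, so the hypothesis is really what lets the lemma serve as a test for whether $H+b$ contains any $C_{16}$ at all.
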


\begin{figure}[htbp]
\centering
\begin{tikzpicture}
  \node[pointnode] (x) at (-3,0) {$x$};
  \node[pointnode] (y) at (3,0) {$y$};
  \node[blocknode] (b) at (0,-1.35) {$b$};
  \node[pointnode] (z) at (0,-2.65) {$z$};

  \draw[cycleedge] (x) to[bend left=38]
    node[midway,above=4pt,fill=white,inner sep=2pt,font=\small]
    {old simple path in \(H\) (\(14\) edges)} (y);
  \draw[newedge] (x)--(b)--(y);
  \draw[auxiliaryedge] (b)--(z);
  \node[font=\small,text=red!70!black] at (0,-0.40)
    {two new cycle edges};
\end{tikzpicture}
\caption{The incremental \(C_{16}\) test. The old \(14\)-edge path and the
two highlighted edges through the new block vertex \(b\) form a
\(16\)-cycle. The third incidence \(bz\) is not used by this cycle.}
\label{fig:c16-oracle}
\end{figure}

\begin{proof}
Any newly created \(C_{16}\) must use \(b\) and exactly two of its incident
edges. Deleting \(b\) from that cycle leaves a simple \(14\)-edge path in
\(H\). Conversely, adding \(b\) and the two corresponding incidence edges
closes any such path into a simple \(16\)-cycle, as in
Figure~\ref{fig:c16-oracle}.
\end{proof}

The statement allows one or two members of \(B\) to be newly introduced
points.  Such a point has degree zero in \(H\), so it cannot be an endpoint of
an old nontrivial path; the equivalence remains valid without a special case.

\section{Triangle-rooted proof}\label{sec:triangle-rooted}

\subsection{Moore reduction and two normalized roots}

\begin{lemma}[Edge-rooted Moore reduction]\label{lem:moore}
Every simple cubic bipartite graph on at most \(58\) vertices with no
\(C_4\) and no \(C_8\) contains a \(C_6\).
\end{lemma}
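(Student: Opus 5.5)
Argue by contradiction with an edge-rooted Moore count. Suppose \(G\) is simple, cubic, bipartite, has at most \(58\) vertices, and contains no \(C_4\), \(C_6\), or \(C_8\). Since \(G\) is bipartite it has no odd cycles, so its girth is at least \(10\). I will show that every cubic graph of girth at least \(10\) has at least \(62\) vertices, which contradicts \(|V(G)|\leq58\). This is sharper than the vertex-rooted Moore bound \(1+3+6+12+24=46\), and that is why the edge-rooted version is needed.

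Fix an edge \(uw\) and grow breadth-first trees from both endpoints. For \(i=0,1,2,3,4\), let \(U_i\) be the vertices at distance \(i\) from \(u\) reached without passing through \(w\), and define \(W_i\) symmetrically. Cubicity gives \(|U_i|\leq 2^i\) as an upper bound; the task is to show these are exact counts and that all ten sets are pairwise disjoint. Every vertex of \(U_i\cup W_j\) is joined to \(u\) or \(w\) by a tree path of length at most \(5\).

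Any coincidence would produce a short closed walk containing a cycle. A repeated vertex within the \(u\)-side tree down to depth \(4\) would close a cycle of length at most \(8\). A vertex lying in both \(U_i\) and \(W_j\) would give a cycle through the edge \(uw\) of length at most \(i+j+1\leq 4+4+1=9\). The same bound, at most \(9\), holds for the relevant cross cases, so all such coincidences are excluded by girth at least \(10\).

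Hence the two trees are disjoint and full. Their total size is
\[
2\,(1+2+4+8+16)=62
\]
vertices, so \(|V(G)|\geq62>58\), a contradiction. Therefore \(G\) contains a \(C_6\), and Lemma~\ref{lem:c4} together with the Berge-cycle translation turns it into a Berge triangle.

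The only step needing care is the distinctness bookkeeping: checking that every possible collision yields a genuinely \emph{simple} cycle of length at most \(9\), rather than a degenerate closed walk. The standard argument handles this. Take the first point at which the two tree paths diverge; the union of the diverging segments is then a simple cycle, and its length is at most the sum of the two path lengths (plus one if the edge \(uw\) is used). Bipartiteness then rounds this bound down to an even length at most \(8\), which is excluded.
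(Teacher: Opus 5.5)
Your proof is correct and is essentially the paper's argument: grow the nonbacktracking depth-four trees from both ends of an edge, and use girth at least $10$ (with bipartite parity rounding the cross-case bound $9$ down to $8$) to make all $2(1+2+4+8+16)=62$ exposed vertices distinct, contradicting $|V(G)|\le 58$. The paper runs the same count inside a single component rather than in all of $G$, which changes nothing here, and your closing remark on extracting a simple cycle is a detail the paper leaves implicit (strictly, you should cut the diverging segments at their first common vertex after the divergence point).
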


\begin{proof}
Suppose instead that \(G\) also has no \(C_6\), and choose an edge \(uv\) in
one of its components. Since \(G\) is simple and bipartite, that component
has girth at least \(10\). Starting from \(u\), without using \(uv\), expose
the nonbacktracking cubic tree through depth four; its level sizes are
\[
  1,2,4,8,16.
\]
Do the same from \(v\). A repeated vertex within one exposure gives a cycle
of length at most \(8\). An intersection between the two exposures gives,
together with \(uv\), a cycle of length at most \(9\), which is even by
bipartiteness and hence has length at most \(8\). Thus all exposed vertices
are distinct, so the component has at least
\[
  2(1+2+4+8+16)=62
\]
vertices, a contradiction.
\end{proof}

\begin{lemma}[Triangle-root orbits]\label{lem:triangle-orbits}
Let \(\cH\) be a linear \(3\)-uniform configuration containing a Berge
triangle. After relabeling, its three triangle blocks are
\[
  \{0,1,3\},\qquad \{1,2,4\},\qquad \{0,2,5\}.
\]
Up to the stabilizer of this rooted triangle, the final block through point
\(0\) is one of
\[
  \{0,4,6\},\qquad \{0,6,7\}.
\]
\end{lemma}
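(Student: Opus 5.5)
We first normalize the triangle, then classify the third block through point \(0\) by how it meets the points already labelled.

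\emph{Normalizing the triangle.} Let the Berge triangle consist of blocks \(B_0,B_1,B_2\) and points \(p_0,p_1,p_2\) with \(p_i\in B_i\cap B_{i+1}\). Linearity gives \(B_i\cap B_{i+1}=\{p_i\}\). The three points \(p_i\) are distinct, so each block contains exactly two of them. Hence each block has exactly one further point \(q_i\). Suppose \(q_i=q_j\) for some \(i\neq j\). Then \(B_i\) and \(B_j\) would share two points, contradicting linearity. Also \(q_i\) is not one of the \(p\)'s, since \(B_i\) already contains two of them and has size three. So the triangle spans six distinct points. Relabel them so that the blocks are \(\{0,1,3\},\{1,2,4\},\{0,2,5\}\). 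Here \(0,1,2\) are the corners and \(3,4,5\) are the tails, with \(3\) opposite corner \(2\), \(4\) opposite \(0\), and \(5\) opposite \(1\).

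\emph{Constraints on the third block through \(0\).} Point \(0\) lies in exactly three blocks. Two of them are \(\{0,1,3\}\) and \(\{0,2,5\}\), so there is a third block \(C=\{0,a,b\}\). Linearity forbids \(C\) from containing \(1\), \(3\), \(2\) or \(5\), since each of these already shares a block with \(0\). Thus \(a,b\in\{4\}\cup\{\text{new points}\}\).

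\emph{Case split.}
\begin{itemize}
\item If \(4\in C\), then \(C=\{0,4,x\}\) with \(x\) new. Relabel \(x\) as \(6\) to get \(\{0,4,6\}\).
\item If \(4\notin C\), then both \(a,b\) are new, and we relabel them \(6,7\) to get \(\{0,6,7\}\).
\end{itemize}
The relabelings only permute unnamed points, so they fix the rooted triangle pointwise and lie in its stabilizer. The two cases are genuinely distinct orbits, because the stabilizer preserves the set of named points \(\{0,\dots,5\}\) and therefore preserves \(|C\cap\{0,\dots,5\}|\).

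\emph{Main obstacle.} The only subtle step is checking that the triangle's six points are distinct, which needs linearity twice, together with the statement's phrase ``up to the stabilizer''. Here we need only the trivial part of the stabilizer: renaming fresh points. The full stabilizer, a copy of \(S_3\) acting on corners and tails, is not needed for this lemma. It would matter only if one also normalized the choice of corner, and the statement fixes that corner as \(0\).
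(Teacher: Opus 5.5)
Your proof is correct and follows essentially the same route as the paper. Like the paper, you show the six triangle points are distinct, use linearity to exclude \(1,2,3,5\) from the third block through \(0\), split on whether the old point \(4\) is used, and relabel the fresh points by first occurrence; your explicit linearity check of distinctness and your observation that the two forms are distinct orbits (since the stabilizer preserves \(|C\cap\{0,\dots,5\}|\)) are correct details the paper leaves implicit.
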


\begin{figure}[htbp]
\centering
\begin{tikzpicture}[scale=0.92]
  \begin{scope}[shift={(-2.9,0)}]
    \node[font=\small\bfseries] at (0,2.35) {(a) rooted Berge triangle};
    \node[pointnode,fill=yellow!22,draw=yellow!55!black] (p0) at (0,1.45) {$0$};
    \node[blocknode] (T1) at (1.5,0.72) {$T_1$};
    \node[pointnode,fill=red!8,draw=red!55!black] (p1) at (1.5,-0.72) {$1$};
    \node[blocknode] (T2) at (0,-1.45) {$T_2$};
    \node[pointnode,fill=red!8,draw=red!55!black] (p2) at (-1.5,-0.72) {$2$};
    \node[blocknode] (T3) at (-1.5,0.72) {$T_3$};
    \node[pointnode,fill=red!8,draw=red!55!black] (p3) at (2.65,0.72) {$3$};
    \node[pointnode,fill=green!10,draw=green!50!black] (p4) at (0,-2.55) {$4$};
    \node[pointnode,fill=red!8,draw=red!55!black] (p5) at (-2.65,0.72) {$5$};

    \draw[cycleedge] (p0)--(T1)--(p1)--(T2)--(p2)--(T3)--cycle;
    \draw[auxiliaryedge] (T1)--(p3);
    \draw[auxiliaryedge] (T2)--(p4);
    \draw[auxiliaryedge] (T3)--(p5);
    \node[align=center,font=\scriptsize] at (0,-3.2)
      {red: already paired with \(0\)\quad green: available old point};
  \end{scope}

  \begin{scope}[shift={(3.35,0)}]
    \node[font=\small\bfseries] at (0,2.35)
      {(b) final block through \(0\)};

    \begin{scope}[shift={(0,-0.18)}]
    \node[blocknode] (R1) at (0,0.95) {$R$};
    \node[pointnode,fill=yellow!22,draw=yellow!55!black] (q0) at (-1.25,0.95) {$0$};
    \node[pointnode,fill=green!10,draw=green!50!black] (q4) at (0,1.85) {$4$};
    \node[pointnode] (q6) at (1.25,0.95) {$6$};
    \draw[auxiliaryedge] (q0)--(R1)--(q4);
    \draw[auxiliaryedge] (R1)--(q6);
    \node[font=\small] at (0,0.35) {$\{0,4,6\}$};

    \node[blocknode] (R2) at (0,-1.15) {$R$};
    \node[pointnode,fill=yellow!22,draw=yellow!55!black] (r0) at (-1.25,-1.15) {$0$};
    \node[pointnode] (r6) at (0,-2.05) {$6$};
    \node[pointnode] (r7) at (1.25,-1.15) {$7$};
    \draw[auxiliaryedge] (r0)--(R2)--(r6);
    \draw[auxiliaryedge] (R2)--(r7);
    \node[font=\small] at (0,-2.7) {$\{0,6,7\}$};
    \end{scope}
  \end{scope}
\end{tikzpicture}
\caption{The forced triangle and its two normalized extensions. Here
\(T_1=\{0,1,3\}\), \(T_2=\{1,2,4\}\), and
\(T_3=\{0,2,5\}\). Linearity excludes \(1,2,3,5\) from the final block
through \(0\), leaving either the old point \(4\) and one new point, or two
new points.}
\label{fig:triangle-roots}
\end{figure}

\begin{proof}
The three intersection points and the three remaining triangle points are
distinct, so they may be labeled as in Figure~\ref{fig:triangle-roots}.
The two existing blocks through \(0\) pair it with \(1,2,3,5\), which
linearity excludes from its final block. The only available old point is
\(4\). Thus the block contains either \(4\) and one new point or two new
points. First-occurrence labeling gives \(6\), or \(6,7\), and the
rooted-triangle stabilizer makes all choices within each form equivalent.
\end{proof}

\paragraph{Restricted-growth extension.}
After installing one of the two roots, the search labels points in order of
first occurrence. At each state it chooses the least introduced point \(p\)
of degree below three and proposes its remaining blocks
\(\{p,q,r\}\) in lexicographic order. Eligible old labels exceed \(p\), have
degree below three, and have not already been paired with \(p\); a proposal
may instead use the next fresh label, or the next two fresh labels together.
The search rejects a proposal that violates a degree or pair constraint or
creates a Berge cycle of length \(4\) or \(8\), and otherwise inserts it and
recurses. It records a completion as soon as every introduced point has
degree three, whether or not the cap of \(29\) points has been reached.

\begin{proposition}[Triangle-rooted coverage]\label{prop:triangle-coverage}
Every connected linear symmetric \(v_3\)-configuration with \(v\leq29\)
that contains a Berge triangle and has no Berge cycles of lengths \(4\) or
\(8\) occurs as a completed state in one of the two triangle-rooted
cap-\(29\) restricted-growth trees.
\end{proposition}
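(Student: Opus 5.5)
The plan is to prove coverage by a \emph{simulation argument}. Fix a target configuration \(\cH\) as in the statement. We construct, step by step, a path from the root of one of the two trees down to a completed state isomorphic to \(\cH\). Throughout, we maintain a partial injection \(\phi\) from introduced labels to points of \(\cH\). It will satisfy two conditions: every block inserted along the path is mapped by \(\phi\) to a block of \(\cH\), and distinct inserted blocks go to distinct blocks of \(\cH\).

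The starting point is Lemma~\ref{lem:triangle-orbits}. Applied to a Berge triangle of \(\cH\), it gives an initial \(\phi\) on the labels \(0,\ldots,5\), together with the final block of \(\cH\) through the image of \(0\). That block has one of the two normalized forms, and the form determines which tree we enter and how \(\phi\) extends to \(6\) or to \(6,7\). At this point the invariant holds.

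The inductive step runs as follows. At a non-completed state, let \(p\) be the least introduced point of degree below three. Since \(\phi(p)\) has degree three in \(\cH\), some block \(C\) of \(\cH\) through \(\phi(p)\) is not yet the image of an inserted block. We write \(C=\{\phi(p),s,t\}\) and translate it into a proposal \(\{p,q,r\}\). If \(s\) is already an image, we take \(q=\phi^{-1}(s)\); otherwise we assign \(s\) the next fresh label, and we treat \(t\) the same way, assigning fresh labels in increasing order. We then check that this proposal is actually generated and survives the rejection tests. Each old label \(q\) exceeds \(p\), because every point with a smaller label already has degree three, and any remaining block through it would have to be \(C\), which would already be inserted. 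Each old label has degree below three, since \(C\) is not yet used. No pair constraint is violated, by linearity of \(\cH\) and injectivity of \(\phi\). Finally, no Berge \(4\)- or \(8\)-cycle can appear: any such cycle in the partial system maps under \(\phi\) to one in \(\cH\), and distinctness of points and blocks is preserved because \(\phi\) is injective on points and on blocks. The fresh-label conventions, namely one new label or two consecutive new labels, match exactly the options that the search enumerates.

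The step needing most care is showing that the search never stops early and never runs past the cap. Each step inserts a new block of \(\cH\), so the path ends after at most \(v\) insertions. Introduced labels inject into \(\cH\)'s points, so there are at most \(v\le 29\) of them and the cap is never exceeded. When the search records a completion, every introduced point has degree three. The set of images is then a union of points whose three \(\cH\)-blocks are all images, which makes it closed under incidence. Since \(\cH\) is connected, that set is all of \(\cH\), and the completed state is isomorphic to \(\cH\) via \(\phi\). Conversely, as long as some introduced point has degree below three, the step above supplies a valid child. The search is complete over all lexicographic proposals, so this child is explored. I expect the main obstacle to be verifying the claim ``old labels exceed \(p\)'' against the exact lexicographic and eligibility rules of the implementation, including any symmetry pruning it performs.
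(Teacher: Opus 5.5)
Your simulation follows the same strategy as the paper's invariant argument, and most of your checks are sound:
\begin{itemize}
\item old labels exceed \(p\);
\item the degree and pair tests follow from linearity and injectivity of \(\phi\);
\item Berge \(4\)- and \(8\)-cycles push forward to \(\cH\);
\item the cap is respected, and connectedness gives completion.
\end{itemize}
The gap is that you pick ``some block \(C\)'' through \(\phi(p)\) arbitrarily. The generator proposes the remaining blocks through \(p\) in lexicographic order, so a new block at \(p\) must come after the block through \(p\) inserted before it. The paper's invariant says exactly this: the inserted target blocks through \(p\) form a lexicographic initial segment, and each new block is later than the preceding one. This is also why the recursion at point \(1\) starts after \(\{1,2,4\}\).

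An arbitrary choice can leave the tree. Suppose \(p\) still needs \(C_1=\{\phi(p),\phi(a),\phi(b)\}\) with \(a,b\) introduced, and \(C_2=\{\phi(p),s,t\}\) with \(s,t\) unseen. If you take \(C_2\) first, it becomes \(\{p,n,n+1\}\). Then \(\{p,a,b\}\), with \(a,b<n\), is lexicographically earlier and is never proposed, so the valid child you claim does not exist. The obstacle you flag, old labels exceeding \(p\), is already settled by your own argument; this ordering is the actual missing point.

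To repair it, take \(C\) to be the lexicographically least remaining block, treating unseen points as larger than every introduced label. Linearity prevents two remaining blocks through \(\phi(p)\) from sharing another point, and a tie between two blocks made only of unseen points is harmless. You then need to show that the translated block is later than every block already through \(p\):
\begin{itemize}
\item A block inserted while a smaller point was processed, or a root block containing a smaller label, has least label below \(p\). It therefore precedes every block whose least label is \(p\).
\item The single exception is \(\{1,2,4\}\) at \(p=1\). Here the remaining block through \(1\) avoids \(0,2,3,4\), so it is later.
\item If \(p\) receives two blocks during its own processing, the least-first rule together with increasing fresh labels keeps them in order.
\end{itemize}
With this ordering clause added to your invariant, the argument becomes the paper's proof.
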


\begin{proof}
Choose a Berge triangle and apply Lemma~\ref{lem:triangle-orbits}. Install
its three blocks and the appropriate fourth block through point \(0\).
The preinstalled block \(\{1,2,4\}\) is lexicographically first among the
blocks with least point \(1\): any other such block cannot reuse a point
already paired with \(1\). Begin the ordinary restricted-growth recursion at
point \(1\), after this block.

Maintain the following invariant before processing a point \(p\): every
target block with smaller least point has been inserted; the inserted target
blocks through \(p\) form a lexicographic initial segment; and introduced
labels are consecutive and follow first occurrence. The normalized root
establishes the invariant at \(p=1\).

Let \(B\) be the first target block through \(p\) not yet inserted. Every
block containing \(p\) and a smaller point was inserted when that smaller
point was processed, so the other two points of \(B\) have labels greater
than \(p\). Previously unseen points receive the next one or two labels.
Thus \(B\) occurs among the generator's proposals and is later than the
preceding block through \(p\). It passes the degree and pair tests because
the target is linear, and it passes the cycle tests because the target has no
Berge cycles of lengths \(4\) or \(8\). Inserting \(B\) preserves the
invariant; when \(p\) becomes cubic, the recursion advances to the least
unfinished introduced point.

If the introduced points formed a proper subset closed under their incident
blocks, the incidence graph would be disconnected. Hence connectedness
ensures that every target point is eventually introduced. Since there are at
most \(29\) points, no label outside the cap is needed. Once all introduced
points are cubic, the tree recognizes the completed configuration
immediately, even if fewer than \(29\) points were introduced.
\end{proof}

\subsection{Certified finite search}

One implementation uses simple-path DFS for its \(C_{16}\) oracle; a second
joins complete lists of \(7\)-edge half-paths. Both use \(29\) as a point cap
and test for a completion whenever all introduced points have degree three.
Their cap-\(29\) totals are:

\begin{table}[ht]
\centering
\small
\caption{Universal cap-$29$ triangle-rooted search, split by root orbit.}
\label{tab:triangle-orbits}
\begin{tabular}{rrrrrrr}
\toprule
orbit & states & attempted & structural & $C_8$ & $C_{16}$ & completions\\
\midrule
1 & 1,405 & 106,964 & 7,184 & 63,526 & 34,850 & 0\\
2 & 20,088 & 1,655,404 & 113,012 & 1,208,473 & 313,832 & 0\\
\midrule
Total & 21,493 & 1,762,368 & 120,196 & 1,271,999 & 348,682 & 0\\
\bottomrule
\end{tabular}
\end{table}

A static certificate contains one stream for each triangle-root orbit. The
streaming checker reconstructs the appropriate root, every state, and every
candidate. It recomputes structural
rejections, validates positive \(C_8\) and \(C_{16}\) witnesses, recursively
checks expansion records, and rejects any completed configuration. Crucially,
completion is tested at the number of points actually introduced, rather
than only at the cap. Thus each stream simultaneously covers every smaller
side size represented by its root orbit.

\begin{proposition}[Certified universal triangle search]
\label{prop:triangle-search}
The two searches were implemented separately and use different \(C_{16}\)
oracles; they agree on every counter and transcript hash for both cap-\(29\)
roots. A third streaming checker accepts both certificate streams with zero
completions. Consequently, no connected linear symmetric
\(v_3\)-configuration with \(v\leq29\) that
contains a Berge triangle avoids Berge cycles of lengths \(4\) and \(8\).
\end{proposition}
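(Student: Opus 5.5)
The proposition has two parts. The first is an empirical assertion about the computation: independent implementations agree, and a checker accepts the certificate. The second is a mathematical consequence of it. I will treat the empirical part as a certified fact to be discharged by reproduction, and derive the consequence from Proposition~\ref{prop:triangle-coverage} together with soundness of the checker.

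\textbf{Step 1: the consequence.} Suppose a connected linear symmetric \(v_3\)-configuration with \(v\le 29\) contains a Berge triangle and has no Berge \(4\)- or \(8\)-cycles. By Proposition~\ref{prop:triangle-coverage}, it appears as a completed state in one of the two cap-\(29\) trees. Each tree is rooted at one of the orbits from Lemma~\ref{lem:triangle-orbits}. The accepted certificate streams record zero completions, so no such configuration exists. One subtlety needs checking: the \(C_{16}\) column of Table~\ref{tab:triangle-orbits}. Proposition~\ref{prop:triangle-coverage} only promises survival of the degree, pair, \(C_4\) and \(C_8\) tests, whereas the search also prunes states that create a \(C_{16}\). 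So I would either run the statement's consequence under the additional hypothesis ``no \(C_{16}\)'' (which is all Theorem~\ref{thm:main} needs), or note that the intended reading is that the target avoids \(C_{16}\) as well. In either form, the pruning is justified by Lemma~\ref{lem:c16}. A target with no \(C_{16}\) never has an intermediate sub-incidence graph containing one, and each newly created \(C_{16}\) is detected exactly by the \(14\)-edge path criterion. The same monotonicity argument justifies the \(C_8\) pruning via Lemma~\ref{lem:c8}. Completion is tested at the actual number of introduced points, so every \(v\le29\) is covered, not just \(v=29\).

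\textbf{Step 2: soundness of the checker.} I would argue that the streaming checker independently re-derives three things. First, the root, from Lemma~\ref{lem:triangle-orbits}. Second, the exact proposal list at each state, from the restricted-growth rule. Third, the disposition of every proposal. Structural rejections are recomputed. Every \(C_8\) or \(C_{16}\) rejection carries an explicit witness cycle, which is verified edge by edge for simplicity and length. Every accepted proposal is matched to an expansion record that is checked recursively. Under these conditions, a complete accepting run certifies that the tree enumerated is exactly the tree defined in Proposition~\ref{prop:triangle-coverage}. Its correctness then no longer depends on either search program, and the agreement of the two differently implemented \(C_{16}\) oracles serves as corroboration.

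\textbf{Main obstacle.} The main difficulty is that a rejection witness proves only that a rejection was \emph{valid}. It cannot prove that the generator enumerated \emph{every} proposal. Completeness of the proposal enumeration is where an error could hide. I would address this by having the checker regenerate the candidate list itself, with eligible old labels greater than \(p\), unpaired with \(p\), and of degree below three, plus one or two fresh labels. The checker then demands a record for each candidate in lexicographic order, so omissions are detected rather than trusted.
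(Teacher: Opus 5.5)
Your proposal is correct and matches the paper's argument. The checker replays the deterministic candidate schedule itself, validates each rejection by recomputation or a positive cycle witness, and recursively consumes every expansion record; Proposition~\ref{prop:triangle-coverage} then turns the zero-completion exhaustion of both cap-\(29\) trees into the finite result. The ``subtlety'' in your Step~1 comes from a misreading, not a gap. A Berge cycle of length \(k\) is a simple \(C_{2k}\) in the Levi graph, so ``no Berge cycles of lengths \(4\) and \(8\)'' already means no \(C_8\) and no \(C_{16}\), with \(C_4\) excluded by linearity. Both the statement and Proposition~\ref{prop:triangle-coverage} therefore already account for the \(C_{16}\) pruning, and no additional hypothesis is needed.
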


\begin{proof}
The checker follows the deterministic candidate schedule: each
rejection has a condition or positive cycle witness checked directly, while
every expansion recursively consumes its child stream. An induction
over this recursion shows that an accepted stream accounts for every proposal
in its rooted search tree. Malformed, truncated, trailing, counter-tampered,
or witness-tampered streams are rejected. Accepted streams therefore close
both cap-\(29\) trees. Proposition~\ref{prop:triangle-coverage} converts this
tree exhaustion into the stated finite result.
\end{proof}

\begin{proof}[Computer-assisted proof of Theorem~\ref{thm:main}]
Suppose \(G\) is a simple cubic bipartite graph on at most \(58\) vertices
with no \(C_4,C_8,\) or \(C_{16}\), and choose a connected component \(H\).
Every component of a cubic graph is cubic. Lemma~\ref{lem:moore} gives a
\(C_6\) in \(H\). Proposition~\ref{prop:incidence} translates \(H\) into a
connected symmetric \(v_3\)-configuration with
\(v=|V(H)|/2\leq29\). Lemma~\ref{lem:c4} makes it linear, and the \(C_6\)
becomes a Berge triangle. The absent \(C_8\) and \(C_{16}\) become absent
Berge cycles of lengths \(4\) and \(8\), contradicting
Proposition~\ref{prop:triangle-search}.
\end{proof}

\subsection{Six deepest kernels}

The state-dumping checker reconstructs \(337\) surviving states with
\(19\) blocks, the maximum depth attained by the triangle-rooted search.
Every such state has already introduced all \(29\) points allowed by the cap.
The retained mapping certificate supplies a point permutation and
block permutation from every labeled state to one of six representatives.

\begin{table}[ht]
\centering
\small
\caption{The six color-preserving classes among the deepest triangle-rooted states.}
\label{tab:triangle-kernels}
\begin{tabular}{rrrr}
\toprule
kernel & labelled occurrences & deficient points & compatible-pair graph\\
\midrule
$K_1$ & 2 & 17 & $3K_{1,2}$\\
$K_2$ & 20 & 18 & $K_2\sqcup2K_{1,2}$\\
$K_3$ & 20 & 18 & $K_2\sqcup2K_{1,2}$\\
$K_4$ & 75 & 18 & $2K_{1,2}$\\
$K_5$ & 200 & 19 & $K_{1,2}$\\
$K_6$ & 20 & 19 & $K_{1,2}$\\
\midrule
Total & 337 & &\\
\bottomrule
\end{tabular}
\end{table}

In the table and in Figure~\ref{fig:compatibility-forests}, isolated deficient
points are omitted from the displayed graph types.

For one of these states, call a point \emph{deficient} when its degree is
less than three. Form a graph on the deficient points by joining \(x\) and
\(y\) precisely when they do not already share a block and the old incidence
graph contains neither a simple \(6\)-edge path nor a simple \(14\)-edge path
between them. Call such a pair compatible.

A new block \(\{x,y,z\}\) can avoid \(C_4,C_8,\) and \(C_{16}\) only if all
three pairs are compatible: an old shared block gives a \(C_4\), while an
old path of length \(6\) or \(14\) is closed by the new block into a \(C_8\)
or \(C_{16}\), respectively. A legal new block therefore requires a triangle
in the compatible-pair graph.

\begin{figure}[htbp]
\centering
\begin{tikzpicture}[scale=0.95]
  \newcommand{\compatstar}[2]{%
    \begin{scope}[shift={(#1,#2)}]
      \draw[semithick,blue!60!black] (0,0.24)--(-0.32,-0.24)
        (0,0.24)--(0.32,-0.24);
      \node[compatnode] at (0,0.24) {};
      \node[compatnode] at (-0.32,-0.24) {};
      \node[compatnode] at (0.32,-0.24) {};
    \end{scope}}
  \newcommand{\compatedge}[2]{%
    \begin{scope}[shift={(#1,#2)}]
      \draw[semithick,blue!60!black] (-0.3,0)--(0.3,0);
      \node[compatnode] at (-0.3,0) {};
      \node[compatnode] at (0.3,0) {};
    \end{scope}}

  \begin{scope}[shift={(-3.8,1.15)}]
    \draw[gray!35,rounded corners=2pt] (-1.6,-0.65) rectangle (1.6,1.1);
    \node[font=\small] at (0,0.82) {$K_1:\ 3K_{1,2}$};
    \compatstar{-1}{0}
    \compatstar{0}{0}
    \compatstar{1}{0}
  \end{scope}
  \begin{scope}[shift={(0,1.15)}]
    \draw[gray!35,rounded corners=2pt] (-1.6,-0.65) rectangle (1.6,1.1);
    \node[font=\small] at (0,0.82) {$K_2:\ K_2\sqcup2K_{1,2}$};
    \compatedge{-1.05}{0}
    \compatstar{0}{0}
    \compatstar{1}{0}
  \end{scope}
  \begin{scope}[shift={(3.8,1.15)}]
    \draw[gray!35,rounded corners=2pt] (-1.6,-0.65) rectangle (1.6,1.1);
    \node[font=\small] at (0,0.82) {$K_3:\ K_2\sqcup2K_{1,2}$};
    \compatedge{-1.05}{0}
    \compatstar{0}{0}
    \compatstar{1}{0}
  \end{scope}

  \begin{scope}[shift={(-3.8,-1.15)}]
    \draw[gray!35,rounded corners=2pt] (-1.6,-0.65) rectangle (1.6,1.1);
    \node[font=\small] at (0,0.82) {$K_4:\ 2K_{1,2}$};
    \compatstar{-0.55}{0}
    \compatstar{0.55}{0}
  \end{scope}
  \begin{scope}[shift={(0,-1.15)}]
    \draw[gray!35,rounded corners=2pt] (-1.6,-0.65) rectangle (1.6,1.1);
    \node[font=\small] at (0,0.82) {$K_5:\ K_{1,2}$};
    \compatstar{0}{0}
  \end{scope}
  \begin{scope}[shift={(3.8,-1.15)}]
    \draw[gray!35,rounded corners=2pt] (-1.6,-0.65) rectangle (1.6,1.1);
    \node[font=\small] at (0,0.82) {$K_6:\ K_{1,2}$};
    \compatstar{0}{0}
  \end{scope}
\end{tikzpicture}
\caption{The compatibility graphs of the six terminal kernels, with isolated
vertices omitted. Every displayed graph is a forest and therefore contains
no triangle.}
\label{fig:compatibility-forests}
\end{figure}

\begin{proposition}[Six deepest kernels]\label{prop:triangle-kernels}
The \(337\) triangle-rooted states with \(19\) blocks form the six
color-preserving isomorphism classes in
Table~\ref{tab:triangle-kernels}. On their existing \(29\)-point sets, none
admits a twentieth block without creating a \(C_4,C_8,\) or \(C_{16}\) in
its Levi graph.
\end{proposition}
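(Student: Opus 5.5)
The plan has two parts: first reduce the $337$ labelled states to six representatives, then show by direct reasoning that none of the six admits a legal twentieth block.

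\textbf{Reduction to six kernels.} The state-dumping checker already reconstructs all $337$ labelled states with $19$ blocks. For each one, the retained mapping certificate supplies a point permutation $\pi$ and a block permutation $\sigma$. I will verify that $\pi$ carries every block $B_i$ of the state onto the block $B'_{\sigma(i)}$ of the assigned representative. I will also check that $\pi$ preserves the colouring used for ``color-preserving'', which at minimum records point degrees and the root-orbit data. This is a mechanical incidence check. Counting the states assigned to each representative should reproduce the column of labelled occurrences in Table~\ref{tab:triangle-kernels}.

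Pairwise non-isomorphism of the six representatives is not needed for the obstruction claim. If wanted, it can be separated by invariants: the number of deficient points and the compatible-pair graph type already split them into $\{K_1\},\{K_2,K_3\},\{K_4\},\{K_5,K_6\}$. For the two remaining pairs I will use a canonical-form comparison, for example the canonical label of the coloured Levi graph.

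\textbf{Obstruction for each kernel.} The argument given just before the statement shows that a legal new block $\{x,y,z\}$ on the existing point set must meet two conditions. Each of its points must be deficient, since otherwise its degree would exceed three. Each of its three pairs must be compatible:
\begin{itemize}
\item an old shared block would give a $C_4$ by Lemma~\ref{lem:c4};
\item an old simple path of $6$ or $14$ edges between two of the points would be closed into a $C_8$ or $C_{16}$, by the same reasoning as Lemma~\ref{lem:c16}.
\end{itemize}
So it suffices to compute, for each of the six representatives, the set of deficient points and the compatible-pair graph, and to check that the graph is triangle-free. Figure~\ref{fig:compatibility-forests} states that each is a forest (with isolated vertices omitted), so it has no triangle and no legal block exists.

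Since isomorphisms preserve deficiency, shared blocks and path lengths, the conclusion transfers to all $337$ labelled states.

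\textbf{Main obstacle.} The step that needs care is certifying the compatible-pair graphs, specifically the \emph{absence} of simple $14$-edge paths between deficient pairs in a $48$-vertex Levi graph. A positive witness is easy to check, but non-existence requires exhaustive enumeration. I will compute this with both independent oracles from \S\ref{sec:triangle-rooted}: simple-path DFS, and the join of $7$-edge half-paths. I will require that they agree on the edge set of each compatible-pair graph.

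A simpler fallback avoids relying on non-existence of $14$-paths. For every triple of deficient points that is pairwise non-collinear and free of $6$-paths, I will exhibit an explicit $14$-edge path witness. Since there are at most $19$ deficient points, this is a small finite list.
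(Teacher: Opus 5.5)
Your proposal is correct and follows essentially the same route as the paper: check that each certificate row gives point and block permutations preserving every incidence, recompute the shared-block, $6$-path and $14$-path tests on deficient pairs in each of the six representatives to get triangle-free compatibility graphs, and transfer the obstruction along the isomorphisms; the dual-oracle cross-check, the separation of the representatives by invariants, and the positive-witness fallback are sound extras that the paper does not use. One small correction: ``color-preserving'' here only means that points go to points and blocks go to blocks, which your separate point and block permutations already guarantee (degrees are then preserved automatically), so you should not also require root-orbit data to be preserved, since that could split the classes more finely than the statement asserts.
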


\begin{proof}
The mapping certificate is checked row by row against the \(337\)
states reconstructed from the two witness streams. The independent Python
checker verifies that every point map and block map is a permutation and
preserves every incidence. It then recomputes simple paths of lengths
\(6\) and \(14\) for every pair of deficient points in each representative.
This gives the forests in Figure~\ref{fig:compatibility-forests}; restoring
the omitted isolated vertices preserves triangle-freeness. Hence no
representative admits another block within the \(29\)-point cap.
\end{proof}

The six forests explain the deepest terminal obstruction; branches terminating
before \(19\) blocks remain covered directly by the two universal witness
streams.

\section{Additional checks}\label{sec:crosschecks}

Agreement between the two triangle-rooted searches holds at the level of
ordered decision-transcript hashes, not only aggregate counters. An earlier
arbitrary-root enumeration, which does not assume a Berge triangle, also
gives zero completions throughout \(v\leq29\). The repository contains the
full outputs and verification tests for both computations.

For a generator-level comparison, the \(C_8\) and \(C_{16}\) filters were
disabled and all completed connected linear symmetric
\(v_3\)-configurations were generated for \(v=7,\ldots,13\). After
color-preserving canonical labeling and deduplication, the resulting graph6
sets agree exactly with nauty 2.9.3 \code{genbg} output generated with degree
three and at most one common neighbor on the block side
\cite{McKayPiperno2014}.

\begin{table}[ht]
\centering
\small
\caption{Set-level generator comparison. The second column counts rooted
and labelled restricted-growth leaves before deduplication; the third is the
common number of color-preserving canonical graph6 records.}
\label{tab:genbg}
\begin{tabular}{rrr}
\toprule
\(v\) & Rooted/labelled leaves & Canonical configurations\\
\midrule
7  & 1         & 1\\
8  & 4         & 1\\
9  & 44        & 3\\
10 & 496       & 10\\
11 & 7,840     & 31\\
12 & 136,575   & 229\\
13 & 2,337,152 & 2,036\\
\bottomrule
\end{tabular}
\end{table}

The common canonical counts in Table~\ref{tab:genbg} also agree with the
published census of Betten, Brinkmann, and Pisanski
\cite{BettenBrinkmannPisanski2000}. This independent overlap checks the
restricted-growth generator through \(v=13\); completeness beyond that range
follows from Proposition~\ref{prop:triangle-coverage}.

As positive controls, two independent cycle enumerators were applied to
\(128\) symmetric \(19_3\)-configurations known to avoid \(C_4\) and \(C_8\)
while containing \(C_{16}\); both classified every input correctly. The
primary implementations and certificate checker still share the mathematical
coverage argument in Proposition~\ref{prop:triangle-coverage}. The computation
does not examine side size \(30\), and hence does not exclude an order-\(60\)
counterexample.

\section{Conclusion}

The Moore reduction and incidence translation leave two triangle-rooted
cap-\(29\) searches. The certified search rules out a completion in either
orbit, and its \(337\) deepest states collapse to six terminal kernels with
triangle-free compatibility graphs. Together, the structural reduction and
certified enumeration prove that every simple cubic bipartite graph on at most
\(58\) vertices contains a \(4\)-, \(8\)-, or \(16\)-cycle. The lower bound
for any cubic bipartite counterexample is therefore \(60\).

\section*{Artifact availability}

The preprint is archived on Zenodo at
\href{https://doi.org/10.5281/zenodo.21695513}
{\code{DOI: 10.5281/zenodo.21695513}} as version \code{v1.0.0}. Source code,
certificates, verification programs, logs, and reproduction instructions are
available from the accompanying
\href{https://github.com/floor-licker/erdos-gyarfas-cubic-bipartite}
{GitHub repository}, which also provides an immutable release, a complete
SHA-256 manifest, and research-provenance documentation.

\section*{AI disclosure}

OpenAI ChatGPT materially assisted the initial research, including developing
computational and structural approaches, portions of the incidence-based code
and preliminary arguments, running and interpreting computations, and an
initial literature audit. OpenAI Codex subsequently assisted with code and
artifact auditing, reproducibility checks, additional verifiers and
certificates, integration of the triangle-rooted method, and drafting and
editing the manuscript and documentation. A custom closed-source coding and
formalization harness built on a fork of Codex also assisted with later code,
formalization, and verification work; it is not included in the public
artifact or treated as independent evidence. Julius Tranquilli selected and
directed the project and accepts responsibility for the final work; the
retained code, logs, exact outputs, certificates, and mathematical arguments,
rather than AI assertions, form the evidentiary basis, and a fuller
activity-level account is included in the repository.

\end{document}